\documentclass[11pt]{article}

\usepackage[a4paper,margin=29mm]{geometry}
\usepackage[T1]{fontenc}
\usepackage{lmodern}
\usepackage{amsmath,amssymb,amsthm}
\allowdisplaybreaks
\usepackage{booktabs}
\usepackage{microtype}
\usepackage[hidelinks]{hyperref}
\hypersetup{
  pdftitle={Quaternary Legendre Pairs of Lengths 42, 46, 52, 58, 66, 72, and 80},
  pdfauthor={Nikita Lebedev},
  pdfsubject={Combinatorial design theory; quaternary Legendre pairs; Hadamard matrices},
  pdfkeywords={quaternary Legendre pair, Hadamard matrix, periodic autocorrelation,
    supplementary difference family, exhaustive search}
}

\newtheorem{theorem}{Theorem}
\newtheorem{lemma}[theorem]{Lemma}
\newtheorem{proposition}[theorem]{Proposition}
\newcommand{\PAF}{\operatorname{PAF}}
\newcommand{\PSD}{\operatorname{PSD}}
\newcommand{\DFT}{\operatorname{DFT}}
\newcommand{\ii}{\mathrm{i}}
\newcommand{\roots}{\{\pm 1,\pm\ii\}}

\title{Quaternary Legendre Pairs of Lengths\\42, 46, 52, 58, 66, 72, and 80}
\author{Nikita Lebedev}
\date{4 August 2026}

\begin{document}
\maketitle

\begin{abstract}
A quaternary Legendre pair of length $\ell$ is a pair of sequences over the
fourth roots of unity whose periodic autocorrelations sum to $-2$ at every
nonzero shift.  Each such pair yields a quaternary Hadamard matrix of order
$2\ell+2$ and a binary Hadamard matrix of order $4\ell+4$.  Fourteen even
lengths up to 100 remained open in the 2025 tables of Kotsireas, Koutschan and
Winterhof and of Jedwab and Pender.  We settle seven of them --- the four
smallest open lengths, 42, 46, 52, and 58, together with 66, 72, and 80 ---
and print every pair in full.  Checking one takes
a few lines of code and exact arithmetic over the Gaussian integers, so the
main results can be confirmed without running any of our software.

Each search writes a pair as four binary sequences under a Gray map,
restricts to reflection-symmetric rows, imposes an exact spectral constraint
at the midpoint frequency, and matches candidates meet-in-the-middle on their
autocorrelation vectors; from length 52 onward, exact compression along
divisor chains shrinks the search before any candidate is lifted.  Everything
is deterministic: no seed, heuristic stopping rule, or floating-point value
enters an acceptance decision.  At lengths 64, 70 and 76 the same machinery ran to exhaustion and
found nothing: the reversible four-block queue built at each length contains
no pair.  These are exhaustions of a specific recorded queue, not nonexistence
results, and the certificates do not by themselves establish that the queue
captures every reversible object of its length.  A closing section sorts every
claim in the paper by what a reader
must accept in order to believe it, and states exactly how far the
accompanying artifacts support each one.
\end{abstract}

\medskip
\noindent\textit{Mathematics Subject Classification 2020}: 05B20, 05B30.\\
\textit{Keywords}: quaternary Legendre pair, Hadamard matrix, periodic
autocorrelation, supplementary difference family, exhaustive search.

\section{Introduction}

A quaternary Legendre pair (qLP) of even length $\ell$ is a pair
$A,B\in\roots^\ell$ satisfying
\begin{equation}
  \PAF(A,k)+\PAF(B,k)=-2
  \qquad (1\leq k<\ell),
  \label{eq:qlp}
\end{equation}
where indices are modulo $\ell$ and
$\PAF(X,k)=\sum_{j=0}^{\ell-1}X_j\overline{X_{j+k}}$.
Kotsireas and Winterhof introduced quaternary Legendre pairs --- generalizing
the binary Legendre pairs of Fletcher, Gysin and Seberry
\cite{fletcher-gysin-seberry-2001} --- and showed that
they yield quaternary Hadamard matrices of order $2\ell+2$ and binary
Hadamard matrices of order $4\ell+4$ \cite{kotsireas-winterhof-2024}.
The 2025 published lists
\cite{kotsireas-koutschan-winterhof-2025,jedwab-pender-2025} left the
following lengths at most 100 unresolved:
\[
  42,46,52,58,64,66,70,72,76,80,88,92,94,100.
\]

This paper resolves seven of them --- the four smallest, 42, 46, 52, and 58,
together with 66, 72, and 80.  Because
the resulting objects are short literal strings and the defining equations use
only exact integer arithmetic, the main results are cheap to verify and do not
depend on trusting our software; Section~\ref{sec:pairs} states them first,
before any machinery.

At three lengths that resisted the same searches --- 64, 70 and 76 --- we
contribute an exact negative instead: the reversible four-block queue
constructed at each was searched to exhaustion and contains no pair.  These
are statements about a specific recorded queue, not nonexistence statements,
and
Section~\ref{sec:terminals} is explicit about the difference and about the
limits of the accompanying evidence.

The remainder is organized so that the strength of each claim is visible.
Section~\ref{sec:pairs} gives the pairs.  Sections~\ref{sec:reduction}
and~\ref{sec:searches} describe the reduction and the searches that found
them.  Section~\ref{sec:terminals} gives the negative results,
Section~\ref{sec:structure} the structural context, and
Section~\ref{sec:known} some explicit pairs at lengths already resolved in
the literature, which serve here mainly as validation of the same pipeline.
Section~\ref{sec:verification} classifies every claim in the paper by what a
reader must accept in order to believe it, and the four levels are worth
stating now.  The pairs require nothing: decode the printed strings and check
the defining identity.  The lemmas and propositions require reading a proof.
The compressed censuses are backed by counts a reader can derive independently
of our code.  Of the headline claims, only the three exhaustions, at lengths
64, 70 and 76,
require either trusting our implementations or repeating the computation --- and
Section~\ref{sec:verification} also lists, explicitly, the assurances the
accompanying material does not provide, and distinguishes the three lengths,
which are not equally well evidenced.  The software behind all of it was written
by AI systems under the author's direction; Section~\ref{sec:provenance} states
that plainly and explains why it is the reason the paper is arranged as it is.

\section{The pairs}
\label{sec:pairs}

Encode $1,\ii,-1,-\ii$ by the digits $0,1,2,3$ respectively.

\begin{theorem}
\label{thm:pairs}
Quaternary Legendre pairs exist at lengths $42$, $46$, $52$, $58$, $66$,
$72$, and $80$.  Explicit examples are the following.
\end{theorem}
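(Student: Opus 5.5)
The proof is a direct finite verification: for each length $\ell\in\{42,46,52,58,66,72,80\}$ we take the printed pair of digit strings, decode them to $A,B\in\roots^\ell$ via $0,1,2,3\mapsto 1,\ii,-1,-\ii$, and check the defining identity \eqref{eq:qlp} for every shift $1\le k<\ell$. No search machinery from later sections is needed. Existence follows from exhibiting the objects, and the correctness of each object is a finite computation in the Gaussian integers $\mathbb{Z}[\ii]$.

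The steps, in order, are as follows.

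First, we confirm that each string has length exactly $\ell$ and uses only the digits $0$–$3$, so that $A,B\in\roots^\ell$.

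Second, we compute the periodic autocorrelation exactly. Since $\overline{\ii^a}=\ii^{-a}$, we have $X_j\overline{X_{j+k}}=\ii^{(x_j-x_{j+k})\bmod 4}$. So $\PAF(X,k)$ is obtained by tallying, for each residue $r\in\{0,1,2,3\}$, how many indices $j$ have $x_j-x_{j+k}\equiv r \pmod 4$. This gives
\[
\PAF(X,k)=(n_0-n_2)+\ii\,(n_1-n_3),
\]
which is pure integer arithmetic with no rounding.

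Third, we check that $\PAF(A,k)+\PAF(B,k)=-2$ for each $k$. Two symmetries halve the work. We have $\PAF(X,\ell-k)=\overline{\PAF(X,k)}$, so it suffices to check $1\le k\le \ell/2$, after which the remaining shifts follow by conjugation. We also need the imaginary parts to cancel.

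As an optional sanity check, we can also verify the equivalent spectral form $\PSD_A(s)+\PSD_B(s)=2\ell+2$ for $s\ne 0$. This is useful as a cross-check, but it is not part of the proof, since it involves floating-point DFTs.

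There is no conceptual obstacle. The only real risk is transcription: a single mis-typed digit in the displayed strings would break the identity. The main "hard step" is therefore making sure the verified strings are exactly the printed ones. We would address this by running the check on text extracted from the compiled statement, and by running it with two independent implementations (for instance a short script in one language, and a computer-algebra evaluation over $\mathbb{Z}[\ii]$).

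For the larger lengths the computation is still trivial: at most $80\cdot 40$ residue comparisons per sequence. Once all $\ell/2$ shifts pass for every listed pair, the theorem is established. The Kotsireas–Winterhof construction \cite{kotsireas-winterhof-2024} then yields the corresponding Hadamard matrices of orders $2\ell+2$ and $4\ell+4$ as a corollary.
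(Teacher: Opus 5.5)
Your proposal is correct and is essentially the paper's proof: decode the printed strings and evaluate \eqref{eq:qlp} exactly over $\mathbb{Z}[\ii]$ at every nonzero shift, with independent implementations guarding against transcription and coding errors. The differences are cosmetic: the paper checks all $\ell-1$ shifts directly instead of using $\PAF(X,\ell-k)=\overline{\PAF(X,k)}$, and it also records the normalization $\sum_j A_j=0$, $\sum_j B_j=1+\ii$, which is not needed for existence.
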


\begin{proof}
Take the pairs
\begin{align*}
A_{42}&=\texttt{0200301203231110313222223130111323021030}\\*[-2pt]
      &\phantom{{}={}}\texttt{02},\\
B_{42}&=\texttt{0233320121011303110321230113031101210233}\\*[-2pt]
      &\phantom{{}={}}\texttt{32},\\
A_{46}&=\texttt{3202200013023311120321313123021113320310}\\*[-2pt]
      &\phantom{{}={}}\texttt{002202},\\
B_{46}&=\texttt{0222111332130001013023313320310100031233}\\*[-2pt]
      &\phantom{{}={}}\texttt{111222},\\
A_{52}&=\texttt{0012122022312330100232030101030232001033}\\*[-2pt]
      &\phantom{{}={}}\texttt{213220221210},\\
B_{52}&=\texttt{3201003311331202313322101101101223313202}\\*[-2pt]
      &\phantom{{}={}}\texttt{133113300102},\\
A_{58}&=\texttt{3322100120221122023313002030010030200313}\\*[-2pt]
      &\phantom{{}={}}\texttt{320221122021001223},\\
B_{58}&=\texttt{0212103032123022301010332013111310233010}\\*[-2pt]
      &\phantom{{}={}}\texttt{103220321230301212},\\
A_{66}&=\texttt{2213010031221002233133331030111220221110}\\*[-2pt]
      &\phantom{{}={}}\texttt{30133331332200122130010312},\\
B_{66}&=\texttt{1030320011031331120021213232202020202022}\\*[-2pt]
      &\phantom{{}={}}\texttt{32312120021133130110023030},\\
A_{72}&=\texttt{2301312223030012312201110313100320232320}\\*[-2pt]
      &\phantom{{}={}}\texttt{23001313011102213210030322213103},\\
B_{72}&=\texttt{1003012103130023332202012301112310222220}\\*[-2pt]
      &\phantom{{}={}}\texttt{13211103210202233320031301210300},\\
A_{80}&=\texttt{2310113101113103322022002332033313001212}\\*[-2pt]
      &\phantom{{}={}}\texttt{2212100313330233200220223301311101311013},\\
B_{80}&=\texttt{1032213013300101033321131112303120220231}\\*[-2pt]
      &\phantom{{}={}}\texttt{2132022021303211131123330101003310312230}.
\end{align*}
Decoding the digits as fourth roots of unity, exact Gaussian-integer
accumulation gives $\sum_j A_j=0$ and $\sum_j B_j=1+\ii$ in all seven cases
and evaluates the left side of \eqref{eq:qlp} to $-2+0\ii$ at every nonzero
shift.  This is a finite calculation over the Gaussian integers with no
rounding; independent verifiers --- programs sharing no code with the search,
with different internal representations --- described in
Section~\ref{sec:verification} perform the same calculation.
\end{proof}

Each pair is identified throughout by the SHA-256 hash of its two digit
strings, each followed by a newline:
\begin{align*}
42:\;&\texttt{b9d438020f96f35a8b4b55481262e681}\\*[-2pt]
    &\texttt{a33bf972e241331275f5348e9af42e5f},\\
46:\;&\texttt{7bb65fcead6311aa9622507f03e337be}\\*[-2pt]
    &\texttt{cdaff8fe7d9553d925c4ef7896777044},\\
52:\;&\texttt{003c4802856e763481d146778f6edaf9}\\*[-2pt]
    &\texttt{bd6f5819976b6230ff4accd1233361db},\\
58:\;&\texttt{2cbae12272527f6ad14818400b1443bd}\\*[-2pt]
    &\texttt{8851cab77f5b5de1165ed6f2c2a8fbcc},\\
66:\;&\texttt{ce3ccf2283763f33692f516761fa4a0c}\\*[-2pt]
    &\texttt{5bdd15626bedf15ac94b2b3d7f6d8eb9},\\
72:\;&\texttt{a185ac8235340995d9e662f34b4d1abd}\\*[-2pt]
    &\texttt{ac7e127a4a92de26729c382053079a5b},\\
80:\;&\texttt{17346d926e5ea01b42245562c8ae9d12}\\*[-2pt]
    &\texttt{24eb46f904f168340c0b29f24ee3a0df}.
\end{align*}
The corresponding midpoint power-spectral splits
$(\PSD(A,\ell/2),\PSD(B,\ell/2))$ are $(4,82)$, $(68,26)$, $(16,90)$,
$(20,98)$, $(4,130)$, $(64,82)$, and $(32,130)$; each sums to $2\ell+2$, as
\eqref{eq:psd-total} below requires.

Applying the constructions of \cite{kotsireas-winterhof-2024} to the pairs at
lengths 66, 72, and 80 gives quaternary Hadamard matrices of orders 134, 146,
and 162 and binary Hadamard matrices of orders 268, 292, and 324.  These are
immediate consequences of qLP existence, and we claim no priority for
Hadamard matrices of those orders.

\section{Normalization, spectra, and the Gray reduction}
\label{sec:reduction}

\subsection*{Normalization and the midpoint equation}

Summing \eqref{eq:qlp} over the $\ell-1$ nonzero shifts and adding
$\PAF(A,0)+\PAF(B,0)=2\ell$ gives, via
$\sum_{k=0}^{\ell-1}\PAF(X,k)=\bigl|\sum_j X_j\bigr|^2$,
the sum condition $|\sum_j A_j|^2+|\sum_j B_j|^2=2$.

\begin{lemma}[Even-length normalization {\cite[Lemma~2.1]{kotsireas-winterhof-2024}}]
\label{lem:even-normalization}
Every even-length qLP is equivalent under the standard qLP equivalences to one
satisfying
\begin{equation}
  \sum_j A_j=0,\qquad \sum_j B_j=1+\ii.
  \label{eq:normalization}
\end{equation}
\end{lemma}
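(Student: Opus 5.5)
The plan is to combine the sum condition stated just before Lemma~\ref{lem:even-normalization} with a congruence argument in $\mathbb{Z}[\ii]$, and then finish with two of the standard qLP equivalences: swapping $A$ and $B$, and multiplying one sequence by a unit. Write $a=\sum_j A_j$ and $b=\sum_j B_j$. Both are Gaussian integers, and the sum condition gives $|a|^2+|b|^2=2$ with $|a|^2,|b|^2\in\mathbb{Z}_{\ge 0}$. So either $\{|a|^2,|b|^2\}=\{0,2\}$ or $|a|^2=|b|^2=1$.

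The key step is to rule out the case $|a|^2=|b|^2=1$ using that $\ell$ is even. I will work modulo the prime $\pi=1+\ii$, whose residue field $\mathbb{Z}[\ii]/(\pi)$ has two elements. Every unit $\pm1,\pm\ii$ is congruent to $1$ modulo $\pi$, since $\ii-1=\ii(1+\ii)$. Hence $a\equiv \ell \pmod{\pi}$. Because $2=-\ii(1+\ii)^2$ and $\ell$ is even, this gives $a\equiv 0\pmod{\pi}$, and likewise $b\equiv 0\pmod{\pi}$. Then $|\pi|^2=2$ divides $|a|^2$ and $|b|^2$. So neither norm equals $1$, and we are in the case $\{|a|^2,|b|^2\}=\{0,2\}$.

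After swapping $A$ and $B$ if necessary, I may assume $a=0$ and $|b|^2=2$. The Gaussian integers of norm $2$ are exactly the four associates $u(1+\ii)$ with $u\in\roots$. Replacing $B$ by $\bar u B$ leaves every $\PAF(B,k)$ unchanged, because $\bar u\,\overline{\bar u}=|u|^2=1$. This replacement multiplies $b$ by $\bar u$, turning it into $1+\ii$, and keeps $a=0$. Both operations preserve \eqref{eq:qlp}, so they are among the standard equivalences, and the result is \eqref{eq:normalization}.

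The only step needing real care is the parity argument; once the unit congruence modulo $1+\ii$ is in hand, it is a one-line calculation. The rest is bookkeeping: I must confirm that swapping the two sequences and scaling one sequence by a fourth root of unity are indeed among the equivalences the paper calls standard, as they are in \cite{kotsireas-winterhof-2024}.
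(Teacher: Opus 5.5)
Your proof is correct and complete. The paper does not prove this lemma itself. It derives only the sum condition $|\sum_j A_j|^2+|\sum_j B_j|^2=2$ that you start from, and then imports the normalization from \cite[Lemma~2.1]{kotsireas-winterhof-2024}. Your argument therefore supplies the step the paper leaves to the citation: parity rules out norms $(1,1)$, and swapping the sequences plus scaling by a unit finishes.

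The parity step can be stated without any Gaussian-integer arithmetic. Each entry contributes $\pm1$ to exactly one of $\operatorname{Re} a$ and $\operatorname{Im} a$, so $|a|^2\equiv \operatorname{Re}a+\operatorname{Im}a\equiv \ell \pmod 2$. This is your congruence modulo $1+\ii$ written in coordinates.

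On the bookkeeping you flag, nothing further is needed. Both operations visibly preserve \eqref{eq:qlp}, and the paper itself invokes ``the swap equivalence'' in Section~\ref{sec:known}.
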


This is the existing universal normalization of Kotsireas and Winterhof, not a
new result; we use the representative of
Lemma~\ref{lem:even-normalization} throughout.  With
$\xi_\ell=\exp(2\pi\ii/\ell)$, $\DFT(X,s)=\sum_j X_j\xi_\ell^{js}$ and
$\PSD(X,s)=|\DFT(X,s)|^2$, equation \eqref{eq:qlp} is equivalent away from
frequency zero --- with frequency zero itself fixed by the sum condition
$|\sum_j A_j|^2+|\sum_j B_j|^2=2$ above, which the converse direction needs
--- to
\begin{equation}
  \PSD(A,s)+\PSD(B,s)=2\ell+2.
  \label{eq:psd-total}
\end{equation}
For even $\ell=2m$, let $E_A,E_B$ be the sums of the even-indexed entries.
Then \eqref{eq:normalization} gives $\DFT(A,m)=2E_A$ and
$\DFT(B,m)=2E_B-(1+\ii)$, so writing $E_A=a_r+\ii a_i$ and $E_B=b_r+\ii b_i$
yields the exact midpoint equation
\begin{equation}
  4(a_r^2+a_i^2)+(2b_r-1)^2+(2b_i-1)^2=2\ell+2.
  \label{eq:midpoint}
\end{equation}
Subsequence parity and range conditions leave a finite list of normalized
midpoint profiles, which the searches enumerate exhaustively.

\subsection*{The Gray reduction to four binary rows}

For binary rows $p,q,r,s\in\{\pm1\}^{\ell}$ define
\begin{equation}
  A=\frac{p+q}{2}+\ii\frac{p-q}{2},
  \qquad
  B=\frac{r+s}{2}+\ii\frac{r-s}{2},
  \label{eq:gray}
\end{equation}
a coordinatewise bijection from four sign pairs onto $\roots$.  With
$C_{x,y}(k)=\sum_j x_jy_{j+k}$, direct expansion gives
\begin{equation}
  \PAF(A,k)
  =\tfrac12\bigl(\PAF(p,k)+\PAF(q,k)\bigr)
  +\tfrac{\ii}{2}\bigl(C_{p,q}(k)-C_{q,p}(k)\bigr).
  \label{eq:gray-paf}
\end{equation}

\begin{lemma}
\label{lem:sym-cross}
If $x_j=x_{-j}$ and $y_j=y_{-j}$ for every $j$, then $C_{x,y}(k)=C_{y,x}(k)$
for every $k$.
\end{lemma}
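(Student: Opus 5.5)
The plan is a direct reindexing argument inside the cyclic group $\mathbb{Z}_\ell$, using only the two symmetry hypotheses and the fact that $j\mapsto -j$ is a bijection of $\mathbb{Z}_\ell$. By definition $C_{x,y}(k)=\sum_{j\in\mathbb{Z}_\ell}x_jy_{j+k}$, with all indices read modulo $\ell$. I will rewrite this sum in three moves until it becomes $C_{y,x}(k)$.

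First, apply the symmetry of both rows termwise: $x_j=x_{-j}$ and $y_{j+k}=y_{-j-k}$. This gives
\[
C_{x,y}(k)=\sum_{j}x_{-j}\,y_{-j-k}.
\]
Second, substitute $t=-j-k$. As $j$ runs over $\mathbb{Z}_\ell$, so does $t$, and $-j=t+k$, so the sum becomes
\[
\sum_t y_t\,x_{t+k}=C_{y,x}(k).
\]

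The only thing that needs care is that the substitution $t=-j-k$ is a bijection of $\mathbb{Z}_\ell$, being an affine map with unit coefficient. This is what makes the reindexing legitimate for every shift $k$, including $k=0$, where the identity is trivial anyway.

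There is no real obstacle here; the lemma is a routine reindexing. I would also note the consequence the paper presumably wants. Combined with \eqref{eq:gray-paf}, the lemma shows that for reflection-symmetric $p,q$ the imaginary cross term vanishes, so $\PAF(A,k)=\tfrac12(\PAF(p,k)+\PAF(q,k))$. The same applies to $B$ with $r,s$.
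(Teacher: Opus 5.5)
Your proof is correct and follows the same route as the paper, whose one-line proof is exactly the substitution $j\mapsto -j-k$ together with both reflection identities. You simply write the steps out: first apply $x_j=x_{-j}$ and $y_{j+k}=y_{-j-k}$, then reindex by $t=-j-k$, a bijection of $\mathbb{Z}_\ell$.
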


\begin{proof}
Substituting $j\mapsto -j-k$ and using both reflection identities transforms
$C_{x,y}(k)$ into $C_{y,x}(k)$.
\end{proof}

\begin{proposition}[Symmetric Gray-row equivalence]
\label{prop:gray-sds}
Suppose $p,q,r,s$ are invariant under $j\mapsto-j$.  Then $A,B$ defined by
\eqref{eq:gray} form a qLP if and only if
\begin{equation}
  \PAF(p,k)+\PAF(q,k)+\PAF(r,k)+\PAF(s,k)=-4
  \qquad (k\neq0).
  \label{eq:binary-four-sum}
\end{equation}
Equivalently, the four negative-position sets form a reversible cyclic
four-block supplementary difference family.
\end{proposition}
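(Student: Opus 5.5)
Using \eqref{eq:gray-paf} for $A$ and its analogue for $B$ with $(r,s)$ in place of $(p,q)$, I will write
\[
\PAF(A,k)+\PAF(B,k)=\tfrac12\bigl(\PAF(p,k)+\PAF(q,k)+\PAF(r,k)+\PAF(s,k)\bigr)+\tfrac{\ii}{2}\bigl(C_{p,q}(k)-C_{q,p}(k)+C_{r,s}(k)-C_{s,r}(k)\bigr).
\]
Before relying on \eqref{eq:gray-paf}, I will check it once. Expand $A_j\overline{A_{j+k}}$ with $A_j=\frac{p_j+q_j}{2}+\ii\frac{p_j-q_j}{2}$. The real part is $\frac14[(p_j+q_j)(p_{j+k}+q_{j+k})+(p_j-q_j)(p_{j+k}-q_{j+k})]=\frac12(p_jp_{j+k}+q_jq_{j+k})$. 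The imaginary part is $\frac14[(p_j-q_j)(p_{j+k}+q_{j+k})-(p_j+q_j)(p_{j+k}-q_{j+k})]=\frac12(p_jq_{j+k}-q_jp_{j+k})$. Summing over $j$ gives the formula.

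Since $p,q,r,s$ are all reflection-invariant, Lemma~\ref{lem:sym-cross} applies to the pairs $(p,q)$ and $(r,s)$. It gives $C_{p,q}(k)=C_{q,p}(k)$ and $C_{r,s}(k)=C_{s,r}(k)$ for all $k$, so the imaginary part vanishes identically. The qLP condition \eqref{eq:qlp} then reads $\tfrac12\sum\PAF=-2$ at each $k\neq0$, which is exactly \eqref{eq:binary-four-sum}. Both directions follow at once, because the identity holds for every $k$ with no further hypothesis. I will also note that \eqref{eq:gray} maps each coordinate sign pair $(p_j,q_j)$ bijectively onto $\roots$. Hence every symmetric quadruple gives a valid quaternary pair, and conversely every pair $(A,B)$ with reflection-symmetric Gray rows arises this way.

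For the ``equivalently'' clause I will let $D_x=\{j: x_j=-1\}$, with $|D_x|=v_x$. The standard count gives $\PAF(x,k)=\ell-4(v_x-\lambda_x(k))$, where $\lambda_x(k)=|D_x\cap(D_x+k)|$. Summing over the four rows, \eqref{eq:binary-four-sum} becomes $\sum_x\lambda_x(k)=\sum_x v_x-\ell-1$, a constant independent of $k\neq0$. This is precisely the defining condition of a cyclic supplementary difference family $4$-$\{\ell;v_p,v_q,v_r,v_s;\lambda\}$ with $\lambda=\sum v_x-\ell-1$. The symmetry $x_j=x_{-j}$ translates to $D_x=-D_x$, which is the reversibility condition. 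Running the same count backwards gives the converse.

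No step is a real obstacle. The only care points are the sign and conjugation conventions in \eqref{eq:gray-paf}, so that the cross terms really appear as $C_{p,q}-C_{q,p}$ and are cancelled by Lemma~\ref{lem:sym-cross}, and getting the constant $\lambda$ right in the difference-family translation.
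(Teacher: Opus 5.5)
Your proof is correct and follows the paper's route. The cross term in \eqref{eq:gray-paf} vanishes by Lemma~\ref{lem:sym-cross}, leaving $\tfrac12\sum\PAF$, and the difference-family clause comes from writing each row as $1-2\mathbf 1_X$ with $\lambda=\sum_x v_x-\ell-1$; the only difference is that you check \eqref{eq:gray-paf} by direct expansion instead of citing Proposition~4.1 of Jedwab and Pender, which makes your argument self-contained.
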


\begin{proof}
Proposition~4.1 of Jedwab and Pender gives the Gray-row equivalence with an
additional amicability condition \cite{jedwab-pender-2025}; their
Observation~2.3, or Lemma~\ref{lem:sym-cross}, makes amicability automatic for
symmetric rows, since the cross term of \eqref{eq:gray-paf} then vanishes.  The difference-family statement follows by writing a binary
row with negative set $X$ as $1-2\mathbf 1_X$.
\end{proof}

Only the first $m$ shifts need checking in \eqref{eq:binary-four-sum}, since
binary periodic autocorrelation is symmetric under $k\mapsto\ell-k$.  If
$X_p,X_q,X_r,X_s\subseteq\mathbb{Z}_\ell$ are the negative-position sets then
$\PAF(x,k)=\ell-4|X|+4N_X(k)$ for $k\neq0$, where $N_X(k)$ counts ordered
differences equal to $k$.  For $\ell=2n$ the normalization forces block sizes
$n,n,n,n-1$ up to permutation, giving parameters $(2n;n,n,n,n-1;2n-2)$ --- the
four-block family of the supplementary-difference-set and Wallis--Whiteman
literature
\cite{wallis-whiteman-1972,dokovic-2009-sds-symmetry,leung-momihara-xiang-2021}.
Reflection of the rows is exactly reversibility $X=-X$ of every block.  We use
this as a standard reformulation, not as a new difference-family result.

\emph{Reversibility is a search device, not part of the definition of a
qLP.}  This distinction is what makes the negative results of
Section~\ref{sec:terminals} family statements rather than nonexistence
statements, and Section~\ref{sec:known} exhibits an explicit pair whose
printed orientation is not reversible.

\section{The searches}
\label{sec:searches}

For each midpoint profile the normalization determines the sums of the four
Gray rows over even and odd positions:
\begin{align*}
 p &: (a_r+a_i,-a_r-a_i),&
 q &: (a_r-a_i,-a_r+a_i),\\
 r &: (b_r+b_i,2-b_r-b_i),&
 s &: (b_r-b_i,-b_r+b_i).
\end{align*}
The search enumerates every reflection-invariant binary row with the
prescribed two sums, deduplicates rows with identical complete PAF vectors,
and solves a four-list sum problem in $\mathbb{Z}^{m}$: choose one PAF vector
from each list whose coordinatewise sum is $(-4,\dots,-4)$.  We materialize
spectrally admissible sums from one pair of lists, sort them by a packed
key, and scan the complementary pair; every key hit is then checked on all
$m$ coordinates, after which the two quaternary sequences are reconstructed
and all $\ell-1$ complex equations of \eqref{eq:qlp} are evaluated directly.
The packed key and any floating-point spectral screen are performance devices
only, never part of the acceptance criterion.

\subsection*{Exact compression along divisor chains}

From length 52 upward the direct four-list search becomes impractical and we
insert exact compression steps, in the tradition of the compression method of
{\DJ}okovi{\'c} and Kotsireas for periodic complementary sequences
\cite{dokovic-kotsireas-2015}.  For a divisor $d\mid\ell$ and a
reflection-invariant row $x$, the compressed row
$z_j=\tfrac12\sum_{t=0}^{d-1}x_{j+t\ell/d}$ is again reflection-invariant,
and compressing \eqref{eq:binary-four-sum} yields a finite system at length
$\ell/d$ with prescribed combined squared norm and combined nonzero PAF.
Compression is a necessary condition, so a compressed solution is a
\emph{candidate} that must be lifted and re-verified at full length; no
compressed statement is ever accepted as a result.
Table~\ref{tab:search} summarizes the route and the dominant exact
enumeration for each length.

At length 52 a factor-4 compression to length 13 gives 5,123 zero-sum rows,
3,437 one-sum rows and 20,074 compressed quadruples, of which catalog entry
7,456 lifts.  At length 58 a factor-2 catalog with a sign and decimation
quotient reduces the compressed search to 2,669,773,437 pair constructions.
At length 66 the richer divisor lattice permits a factor-6 quotient at length
11 refined through factor 2 at length 33, with a complete 136-profile
midpoint census; at length 72 the chain $9\to18\to36\to72$ makes all three
layers useful, with a complete 14-profile census.  At length 80 the chain
$10\to20\to40\to80$ produces a reversible factor-2 queue of $784{,}903$
projected jobs.  The search there was organized as a cost-ranked sweep in
which each worker claims an interval once and publishes an exact ledger on it
before opening its next, intervals proceeding concurrently across workers; the
sweep halted when it found the witness, having closed $495{,}903$ of
the $784{,}903$ ranks.  The length-80 queue was therefore not exhausted, and
we make no emptiness claim of any kind at that length.

\begin{table}[t]
\centering
\small
\caption{Deterministic search summary.  ``Core enumeration'' is the dominant
exactly counted quantity for each route; per-stage statistics, censuses and
transcripts are in the accompanying material, up to the omissions it
discloses.  All figures are exact counts, not estimates.}
\label{tab:search}
\begin{tabular}{@{}rlll@{}}
\toprule
$\ell$ & midpoint split & compression route & core enumeration\\
\midrule
42 & $(4,82)$   & direct                    & 1{,}013{,}530{,}896 left pairs\\
46 & $(68,26)$  & direct                    & 5{,}841{,}238{,}095 left pairs\\
52 & $(16,90)$  & factor 4 to length 13     & 20{,}074 compressed quadruples\\
58 & $(20,98)$  & factor 2 to length 29     & 2{,}669{,}773{,}437 pair constructions\\
66 & $(4,130)$  & $11\to33\to66$            & 136 profiles; 29{,}642{,}598 pair records\\
72 & $(64,82)$  & $9\to18\to36\to72$        & 14 profiles; full factor-8/4/2 queues\\
80 & $(32,130)$ & $10\to20\to40\to80$       & cost-ranked queue, interval 172001--173000\\
\bottomrule
\end{tabular}
\end{table}

The length-80 witness illustrates why exhaustive discipline matters: it sits
at factor-2 queue rank $172{,}296$ of $784{,}903$ (checkpoint rank 630,
factor-4 rank $4{,}274{,}145$, factor-8 job $22{,}323$, midpoint profile 29),
and it was the single fingerprint hit passing exact comparison among the
$1{,}825{,}988{,}440$ scanned pair records of its interval.  Nothing about the
length distinguished it beforehand, and a search that stopped early or
sampled would have missed it.

\section{Exhausted reversible queues at lengths 64, 70 and 76}
\label{sec:terminals}

At three of the resistant lengths the same hierarchy was run until its queue
was exhausted.  In each case the object exhausted is the reversible factor-2
queue that the cost-ranked ladder constructed at that length, and in each case
no pair was found.

\begin{center}
\small
\begin{tabular}{@{}rrrrl@{}}
\toprule
$\ell$ & queue ranks closed & exact hits examined & checkpoints & pairs found\\
\midrule
64 & $5{,}964/5{,}964$ & $464{,}864$ & $64{,}960$ & none\\
70 & $170/170$ & $39{,}798$ & --- & none\\
76 & $9{,}298/9{,}298$ & --- & $1{,}032{,}464$ & none\\
\bottomrule
\end{tabular}
\end{center}

At length 64 the twelve interval ledgers tile queue ranks $1$ through $5{,}964$
without gap or overlap, and their per-interval counts sum to the $464{,}864$
exact hits and $64{,}960$ retained checkpoints reported above.  At length 76
all $1{,}032{,}464$ canonical factor-2 checkpoints were searched through the
final lift, across 47 interval records of which 38 close ranks and 9 are typed
failures crediting zero ranks; the 38 closing ledgers tile ranks $1$ through
$9{,}298$ without gap or overlap.  At length 70 the twelve interval ledgers
tile queue ranks $1$ through $170$ without gap or overlap and account for all
$39{,}798$ exact hits; that length additionally carries an independent audit of
queue completeness and a scope declaration committed before the intervals ran,
both discussed in Section~\ref{sec:verification}.  Each closing interval at
every length carries an audit produced by a program that imports no search
code, and each audit is bound to its interval by SHA-256.

\subsection*{What these statements do and do not assert}

These results do \emph{not} assert that no quaternary Legendre pair exists at
64, 70 or 76.  They assert that a specific finite queue was searched
exhaustively and yielded nothing.  Two further limitations should be read as
part of the claim.

First, the certificates establish that the \emph{recorded} queue was closed.
At 64 and 76 they do not, by themselves, derive that queue from every
reversible four-block object of the length; that derivation is a property of
the construction described in Section~\ref{sec:searches} and is not separately
certified, so a reader who wishes to treat those two as statements about the
complete reversible family must accept that step.  At 70 one link of it is
certified: an audit rebuilds the queue from the pinned parent stream and finds
it complete, with no missing, duplicate, extra or misordered record.  That
closes the step from parent stream to queue and not the step from the
reversible family to the parent stream, which remains uncertified at all three
lengths.

Second, reversibility is a search restriction and not part of the definition of
a qLP (Section~\ref{sec:reduction}), and Section~\ref{sec:known} exhibits an
explicit pair whose displayed orientation is not reflection-symmetric.  The
possibility of a non-reversible pair at 64, 70 or 76 is therefore real and not
a formality.

\section{Reversible structure and context}
\label{sec:structure}

All seven witnesses of Theorem~\ref{thm:pairs} are literal reversible Gray
representatives: their four rows satisfy $x_j=x_{-j}$ with no further
equivalence operation applied.  Auditing the locally available explicit
witnesses and theorem-derived subfamilies through length 100 conservatively
partitions the fifty even lengths $2,4,\dots,100$ as follows:
\begin{itemize}
  \item verified reversible representatives:
  \[
    2,10,26,42,46,52,58,66,72,74,80,82;
  \]
  \item qLP existence known, reversibility not established by this audit:
  \[
  \begin{split}
    &4,6,8,12,14,16,18,20,22,24,28,30,32,34,36,38,40,44,\\
    &48,50,54,56,60,62,68,78,84,86,90,96,98;
  \end{split}
  \]
  \item existence still open after the present constructions:
  \[
    64,70,76,88,92,94,100.
  \]
\end{itemize}
These twelve, thirty-one, and seven lengths account for all fifty.  Here
``not established'' is deliberately not a nonexistence claim: for example the
standard-equivalence orbits of the sampled first-family witnesses of Jedwab
and Pender contain no reversible representative beyond length 2, whereas their
second family directly gives the reversible subfamily $10,26,74,82$
\cite{jedwab-pender-2025}.

The first three new witnesses share the block parameters
$(21,21,20,21)$ with $\lambda=40$ at $\ell=42$, $(23,23,22,23)$ with
$\lambda=44$ at $\ell=46$, and $(26,26,25,26)$ with $\lambda=50$ at
$\ell=52$.  Each row has full period, and the ordered affine stabilizer of
each four-block tuple is exactly identity and reversal, with no two blocks
within a tuple affinely equivalent even after complementation.  At the prime
quotients 7, 23, and 13 respectively, compressed-value multiplicities exclude
the simplest one-character form $\alpha+\beta\chi$ and the quotient multiplier
stabilizers remain $\{1,-1\}$.  The present data therefore support the coupled
reversible four-block system as the shared structure but do not support a
naive quadratic-character construction; higher-rank cyclotomic descriptions
are not ruled out.

\section{Explicit pairs at lengths already resolved}
\label{sec:known}

The lengths $34$, $74$, $82$, $122$, and $158$ were already resolved in the
literature, so the pairs recorded here add no new length.  We include them for
two reasons: they exercised the same verification pipeline on independently
predictable targets, which is evidence that the pipeline reports correctly;
and two of them bear on the boundary of a published construction.  The literal
words and hashes are in Appendix~\ref{app:known}.

\subsection*{The applicability boundary of the Jedwab--Pender second family}

The second family of Jedwab and Pender \cite{jedwab-pender-2025} gives a qLP
of length $2p$ for odd primes $p$ for which the arithmetic ingredients of
their Theorem~1.3 exist.  Implementing their equations (4.5)--(4.11) verbatim
over $\mathrm{GF}(q^2)$ with $q=2p-1$ shows the exact applicability condition
to be that $p$ is an odd prime \emph{and} $q=2p-1$ is a prime power.  Below
$p=83$ the admissible primes are
\[
  p\in\{3,5,7,13,19,31,37,41,61,79\},
\]
and we materialized the four cases not already in the published tables:
lengths $74$ ($q=73$), $82$ ($q=3^4$), $122$ ($q=11^2$), and $158$
($q=157$).  The remaining admissible lengths $6$, $14$, $38$, and $62$ are
already printed in the literature; we neither rematerialized those pairs nor
examined their reversibility.  Existence at the four materialized lengths,
$74$, $82$, $122$, and $158$, is already implied by the theorem;
we print the words because we have not found them printed elsewhere, and make
no priority claim.  At $p=83$, corresponding to the
target length $166$, the condition fails --- $165=3\cdot5\cdot11$ is
not a prime power --- and that is the exact boundary of this family's reach.

In each materialized pair one word is real, which is the shape $(G(w,x),y)$ of
the construction.  The frozen file order places the quaternary word (sum
$1+\ii$) first and the real word (sum $0$) second, so the normalization
\eqref{eq:normalization} holds after the swap equivalence.

\subsection*{An explicit pair whose displayed orientation is not reversible}

At length 34 an exact lift of an asymmetric $p=17$ parent yields a pair whose
real word is \emph{not} reflection-symmetric: decoding the two printed strings
of Appendix~\ref{app:known} into Gray rows gives rows that fail $x_j=x_{-j}$.
Six parent orbits and 384 exact midpoint jobs were enumerated, ten closing
\textsc{unsat} by an exact solver before the eleventh returned the pair; no
claim rests on the ten refutations, which are reported from the lane records.

We claim only what that computation shows: the orientation printed here is not
literally reversible.  Concluding that the pair lies outside the reversible
equivalence class would require an orbit argument over the full equivalence
group, which we do not carry out.  Even in the weaker form the example is
useful for Section~\ref{sec:terminals}, since it shows that the reversible
form is a genuine restriction on presentation and not automatic.

\section{Verification and reproducibility}
\label{sec:verification}

Computational results in this area rest on software, and a reader is entitled
to ask exactly where trust is required.  We therefore classify every claim in
this paper by what must be accepted in order to believe it, from strongest to
weakest.

\paragraph{Tier A: the pairs (Theorem~\ref{thm:pairs} and Appendix~\ref{app:known}).}
Unconditional, and verifiable without any of our software.  Decode the printed
digit strings as fourth roots of unity, accumulate
$\sum_j X_j\overline{X_{j+k}}$ over the Gaussian integers, and confirm the
value $-2+0\ii$ at each of the $\ell-1$ nonzero shifts together with
$|\sum_j A_j|^2+|\sum_j B_j|^2=2$.  This is a few lines of code in any
language with exact integers, runs in well under a second per pair, and
depends on nothing in the accompanying material.  The reader may stop here and
still have Theorem~\ref{thm:pairs} in full.

\paragraph{Tier B: statements proved in this paper.}
Lemma~\ref{lem:sym-cross}, Proposition~\ref{prop:gray-sds}, the midpoint
equation \eqref{eq:midpoint}, the spectral identity \eqref{eq:psd-total}, and
the difference-family reformulation are ordinary mathematics with proofs given
above, refereeable on paper and independent of any computation.

\paragraph{Tier C: certificate-backed computations.}
Claims whose evidence is an object checkable in far less time than the
computation that produced it.  The compressed censuses are finite enumerations
whose sizes are derivable independently of our code; for instance the
length-66 midpoint census has $16+48+48+24=136$ signed labeled profiles across
its four types, and the length-52 factor-4 compression yields $5{,}123$
zero-sum and $3{,}437$ one-sum rows.  Agreement between a hand-derived count
and the count our programs report would be broken by the most common
implementation faults --- truncated ranges, off-by-one bounds, and dropped
shards --- though it cannot rule out every possible fault.

\paragraph{Tier D: the three exhaustion claims (Section~\ref{sec:terminals}).}
The queue exhaustions at lengths 64, 70 and 76 are the only headline results
for which a reader must either trust our implementations or repeat the
computation.  Supporting computations elsewhere --- the structural audits of
Section~\ref{sec:structure} and the search-history figures discussed below ---
are likewise trust-based, but no claim of the paper rests on them.  The
three lengths are not equally well supported, and we distinguish them
throughout rather than quoting the strongest case for all of them.

What the accompanying material supports:
\begin{itemize}
  \item \emph{Interval ledgers that tile the queue.} At each length the
  per-interval ledgers cover every queue rank exactly once, with no gap and no
  overlap, and their per-interval counts sum to the totals reported in
  Section~\ref{sec:terminals}.  A reader can recompute both properties from the
  published ledgers in seconds.
  \item \emph{Specification-written audit.} Every closing interval carries an
  audit
  produced by a program that imports no search code and re-derives the required
  conditions --- periodic autocorrelations over $\mathbb{Z}$ and over
  $\mathbb{Z}[\ii]$, parent folding, row and alternating sums, orbit
  identities, and shard coverage --- from the records the search emitted.  It
  does \emph{not} re-traverse the search space: exhaustion rests on the
  production engine's terminal traversal records, and we make no claim that any
  search here was independently re-executed or that its exhaustion was
  independently recomputed.
  \item \emph{Hash binding, including of the auditor.} Each audit is bound to
  its interval, and each interval to the authoritative queue, by SHA-256.  At
  all three lengths each audit additionally pins, by SHA-256, the auditor that
  produced it together with the search programs and the runner, so a reader can
  confirm from the published artifact which program performed the audit rather
  than taking our word for it.
  \item \emph{Exact counts.} Every reported enumeration is an exact integer
  count, not an estimate, and no floating-point value enters an acceptance
  decision.
  \item \emph{Recovery of known positives.} The engines were required to
  rediscover independently known pairs through the same production entry point
  before their negative reports were credited.  Because those controls run at
  lengths other than the target, they detect faults a target-only test cannot;
  in the course of this work such a control caught a parser specialized to a
  single length.
  \item \emph{A scope declaration committed before execution, at length 70
  only.} A ledger naming all twelve intervals, covering ranks $1$ through
  $170$ with no gap or overlap, binding the queue by SHA-256 and marking every
  interval as not started, is committed two and a half minutes before the
  earliest runner timestamp recorded inside the interval artifacts, and is an
  ancestor of all twelve commits that introduced an interval audit.  This is
  repository-level chronology, not an externally attested clock: commit
  timestamps are settable by whoever makes the commit, and a history of this
  shape could be constructed after the fact by someone intending to.  We claim
  only what it is --- a declaration of scope that precedes the results in the
  recorded history --- and we claim it for length 70 alone.  That history is
  the private working repository's: the release ships the declaration and the
  audits it binds, not the repository objects, so the reader has our report of
  the ancestry check rather than the means to re-run it.
  \item \emph{An independent audit of queue completeness, at length 70 only.}
  A separate program rebuilds the ranked queue from the pinned parent stream
  and reports no missing, duplicate, extra or misordered record, with the
  parent counts agreeing.  Its scope is the step from parent stream to queue.
  \item \emph{A recorded replay, at length 70 only.} A transcript of the
  fine-hit verifier re-run over all 170 shards is included: every shard exact,
  all $39{,}798$ audited records satisfying both autocorrelation conditions,
  and each invocation's output bound by SHA-256.  It re-executes the lane's own
  verifier rather than a second implementation, and it was performed on the
  originating host rather than from a clean checkout.
\end{itemize}

What it does \emph{not} support, stated so that no reader has to discover it:
\begin{itemize}
  \item \emph{No prospective scope declaration at 64 or 76.} An earlier draft
  described queue scopes as having been published before the corresponding
  searches ran, at every length.  We withdraw that for two of the three.  At
  length 64 the declaration file shipped with the certificate is timestamped
  after the intervals it declares had already completed, and its worker plan
  covers a smaller rank range than the certificate; at length 76 the material
  needed to establish the ordering is not part of this release.  At those two
  lengths the exhaustion claim rests on the ledgers tiling the queue and on
  nothing about when the scope was fixed.
  \item \emph{No replay from a clean checkout, at any length.} The length-70
  transcript above was produced on the originating host, against catalogs that
  are hash-pinned but not published because of their size, and it records a
  source-tree commit that is preserved on a side branch rather than in the main
  history.  At 64 and 76 we supply replay instructions and no transcript at
  all.  A reader starting from a fresh extraction of the archived deposit
  cannot reproduce any of the three exhaustions without
  first obtaining data we do not ship.
  \item \emph{Raw search data is partly omitted.} For length 76 the
  per-interval checkpoint files, shard transcripts and compiled binaries are
  not included, and for length 70 the two compressed-row catalogs the verifier
  consumes are not included either; each omitted file is pinned by SHA-256 in
  its manifest.  The certificate records and their hash bindings are checkable
  without them; re-executing the interval audits is not, and neither is
  re-deriving the hits or checkpoints from scratch.
  \item \emph{Queue construction is not separately certified.} As
  Section~\ref{sec:terminals} states, the certificates show that the recorded
  queue was closed, not that the queue enumerates every reversible four-block
  object of its length.  Length 70 certifies one link of that chain and not the
  chain; 64 and 76 certify none of it.
  \item \emph{Search-history figures are reported, not certified.} Counts
  quoted at lengths where only the pair is claimed --- the length-80 rank
  figures and the length-34 solver outcomes --- come from lane records that
  are not part of the published packages, and no claim rests on them.
\end{itemize}
The solver-assisted steps also emit no machine-checkable refutation
certificate, because the solvers used rely on native cardinality constraints
that lie outside clausal DRAT proof logging \cite{wetzler-heule-hunt-2014};
pseudo-Boolean proof logging \cite{bogaerts-gocht-mccreesh-nordstrom-2022}
would close that gap.  No claim in this paper depends on a solver refutation.

\paragraph{Accompanying material.}
For each of the lengths 42, 46, 52, 58, 66, and 72 the material contains the
canonical JSON certificate, the C or C++ constructor with its recorded strict
build command, the search transcript, independent verifiers, regression tests,
and a SHA-256 manifest.  All accepted constructors are deterministic: no seed,
heuristic stopping rule, or external solver participates in acceptance.  The
pairs at lengths 80, 34, 74, 82, 122, and 158 are supplied instead as
content-addressed files whose names are the SHA-256 hashes of their own bytes,
each committed, per the working-repository history, before any analysis of the
pair; for these lengths the material
contains the pair and its verification, not a full search package.  A reference
verifier covering all twelve pairs accompanies the release and requires only
the Python standard library.  The complete release --- this paper, the witness
packages, the three exhaustion certificates, the search programs and the
reference verifier --- is archived at
\href{https://doi.org/10.5281/zenodo.21776795}{\texttt{doi:10.5281/zenodo.21776795}}.

\section{Provenance and tooling}
\label{sec:provenance}

The searches, verification software, and supporting analysis for this work were
produced with the help of AI systems working under the author's direction:
principally OpenAI's GPT-5.6 Sol for the construction, search and proof work,
with auditing and independent verification by Anthropic's Claude Fable 5.  The
author is solely responsible for the content of this paper and for the claims
made in it.

Where this paper says a verifier is independent, it means it shares no code
with the search, and nothing stronger.

Machine-authored software warrants more external
verification, not less, which is why the paper is arranged so that its
principal results sit in Tier A: the pairs of Theorem~\ref{thm:pairs} can be
confirmed from the printed digit strings alone, without executing any code that
any of these systems wrote.  Section~\ref{sec:verification} states, for the
three exhaustion claims, both the controls that were applied and the ones that
were not.

\section{Concluding remarks}

Seven lengths --- 42, 46, 52, 58, 66, 72, and 80 --- are removed from the 2025
lists of unsettled cases at most 100, leaving
\[
  64,70,76,88,92,94,100.
\]
At 64, 70 and 76 we contribute negative structure instead: the reversible
four-block queue built at each length was searched to exhaustion and yielded
nothing, so progress there plausibly requires either a non-reversible search or
a construction outside the families we could reach.  We emphasize once more
that this is not a nonexistence result at any of the three, and that the three
are not equally well evidenced --- Section~\ref{sec:verification} separates
them.

The reflection restriction is a search device rather than a condition in the
definition of a qLP, and its effectiveness here suggests testing symmetry
classes before invoking unrestricted SAT or external-memory searches --- with
the caveat, now measured at three lengths, that a symmetry class can be
exhausted
without a pair while the length remains open.  The affine and quotient audits
show no simple multiplier or one-character explanation for the new witnesses,
and an infinite construction remains open.

Two methodological observations may transfer.  First, a witness can sit deep in
a cost ranking with nothing to distinguish it beforehand: the length-80 pair
was found only after $495{,}903$ of its $784{,}903$ queue ranks had been
closed, and a search that sampled or stopped early would have missed it.
Claim-once interval accounting is what makes such a search both finishable and
auditable.  Second, at length 72 the 55 homometric double classes at the first
quotient show why a PAF key must index literal classes rather than be treated
as a unique row representative.

\appendix

\section{Literal words at lengths 34, 74, 82, 122, and 158}
\label{app:known}

The following pairs are at lengths already settled in the literature and are
recorded for the reasons given in Section~\ref{sec:known}.  At length 34, $A$
is real but not reflection-symmetric.  At lengths 74, 82, 122, and 158 the
first word is the quaternary word with sum $1+\ii$ and the second is the real
word with sum $0$, following the frozen file order; the normalization
\eqref{eq:normalization} holds after the swap equivalence.
\begin{align*}
A_{34}&=\texttt{2202202000022020222202000220200020},\\
B_{34}&=\texttt{1133303133112311101113211331303331},\\[2pt]
U_{74}&=\texttt{0002030113200323223100101220133212022122}\\*[-2pt]
      &\phantom{{}={}}\texttt{0212331022101001322323002311030200},\\
V_{74}&=\texttt{0020022020000222022220222000020220020202}\\*[-2pt]
      &\phantom{{}={}}\texttt{0022020000222022220222000020220020},\\[2pt]
U_{82}&=\texttt{0201133332322130220320120021300101111332}\\*[-2pt]
      &\phantom{{}={}}\texttt{0102331111010031200210230220312232333311}\\*[-2pt]
      &\phantom{{}={}}\texttt{02},\\
V_{82}&=\texttt{0002002200022222020200202022222000220020}\\*[-2pt]
      &\phantom{{}={}}\texttt{0200200220002222202020020202222200022002}\\*[-2pt]
      &\phantom{{}={}}\texttt{00},\\[2pt]
U_{122}&=\texttt{0332201213003113332302003320302021201122}\\*[-2pt]
      &\phantom{{}={}}\texttt{0210111331221303200111110023031221331110}\\*[-2pt]
      &\phantom{{}={}}\texttt{1202211021202030233002032333113003121022}\\*[-2pt]
      &\phantom{{}={}}\texttt{33},\\
V_{122}&=\texttt{0020002220220000022002022020222222020220}\\*[-2pt]
      &\phantom{{}={}}\texttt{2002200000220222000202020002220220000022}\\*[-2pt]
      &\phantom{{}={}}\texttt{0020220202222220202202002200000220222000}\\*[-2pt]
      &\phantom{{}={}}\texttt{20},\\[2pt]
U_{158}&=\texttt{0113001330310102002202132100333032223030}\\*[-2pt]
      &\phantom{{}={}}\texttt{2121000121112230130200220232312113221331}\\*[-2pt]
      &\phantom{{}={}}\texttt{3312231121323202200203103221112100012120}\\*[-2pt]
      &\phantom{{}={}}\texttt{30322230333001231202200201013033100311},\\
V_{158}&=\texttt{0002002200002022020000002002222002220202}\\*[-2pt]
      &\phantom{{}={}}\texttt{0202000220000220222222020020222200220222}\\*[-2pt]
      &\phantom{{}={}}\texttt{0020022000020220200000020022220022202020}\\*[-2pt]
      &\phantom{{}={}}\texttt{20200022000022022222202002022220022022}.
\end{align*}
Exact Gaussian-integer accumulation evaluates the left side of \eqref{eq:qlp}
to $-2+0\ii$ at every nonzero shift of all five pairs.  The midpoint splits
are $(36,34)$, $(146,4)$, $(162,4)$, $(242,4)$, and $(314,4)$, each summing to
$2\ell+2$.  The SHA-256 identifiers, computed as in Section~\ref{sec:pairs},
are
\begin{align*}
34:\;&\texttt{217327cfb288d87c35c17eed29c8a86e}\\*[-2pt]
    &\texttt{7432246b1dc3d9ff4b0dd7273cdba425},\\*
74:\;&\texttt{c8b09592193310a5cf1159321936ea75}\\*[-2pt]
    &\texttt{99fbc078ee526f1f070397ccc09bed0b},\\*
82:\;&\texttt{41b1c5c743424d67014ae5839baba903}\\*[-2pt]
    &\texttt{1a4ea0e24f0734181deadc8086a2920c},\\*
122:\;&\texttt{bc66ce343d3071e56f4e682d21bcee24}\\*[-2pt]
    &\texttt{972786bf1d0b648c8999cd1c6432392f},\\*
158:\;&\texttt{40fe095ff285c4e3e924c2a8f16d35e6}\\*[-2pt]
    &\texttt{4fb4907069c0dbdf491406033ed0614a}.
\end{align*}

\bibliographystyle{plain}
\bibliography{references}

\bigskip
\noindent\textsc{Nikita Lebedev, independent researcher}\\
\textit{Email address}:
  \href{mailto:nlebedev.research@gmail.com}{\texttt{nlebedev.research@gmail.com}}\\
\textit{ORCID}:
  \href{https://orcid.org/0009-0006-6100-4736}{\texttt{0009-0006-6100-4736}}\\
\textit{DOI}:
  \href{https://doi.org/10.5281/zenodo.21776795}{\texttt{10.5281/zenodo.21776795}}

\end{document}